\theoremstyle{plain}
\newtheorem{theorem}{Theorem}[section]
\newtheorem{proposition}[theorem]{Proposition}
\theoremstyle{definition}
\newtheorem{remark}[theorem]{Remark}
\newtheorem{definition}[theorem]{Definition}
\title{Adams operations on classical compact Lie groups}
\author{Chi-Kwong Fok}
\date{July 21, 2016}
\begin{document}
\maketitle

\begin{abstract}
	Let $G$ be $U(n)$, $SU(n)$, $Sp(n)$ or $Spin(n)$. In this short note we give explicit general formulas for Adams operations on $K^*(G)$, and eigenvectors of Adams operations on $K^*(U(n))$.
\end{abstract}

\maketitle


\section{Introduction}
Adams operations are important cohomological operations on $K$-theory. To see their importance one needs to look no further than their application to the famous problem of finding parallelizable spheres (cf. \cite{A}). Another application of Adams operations is the extraction of certain information on homotopy groups of $H$-spaces and Lie groups in particular (cf. \cite{Bou}, \cite{D}, \cite{DP}). With this in mind, we present the computation of Adams operations on compact classical Lie groups in this note. 

The determination of Adams operations on compact Lie groups was first carried out by Naylor (cf. \cite{N}). There he gave an explicit formula for $\psi^2$ on $K^*(SU(n))$ and, by means of repeated applications of this formula and Lagrange interpolation, wrote down a description of $\psi^l$, which is not as explicit. He also provided a recursive algorithm to find the eigenvectors of $\psi^l$ without giving an explicit formula. For the other compact classical Lie groups, he suggested ways of computing Adams operations based on the result for $SU(n)$ and the functoriality of $\psi^l$, and no explicit formulas were presented.

Adams operations on the rank 2 compact Lie groups $SU(3)$, $Sp(2)$ and $G_2$ were computed explicitly in \cite{W2} and \cite{W3} by means of the Chern character isomorphism on those groups (cf. \cite{W}). It would be difficult to extend this technique to yield formulas for other higher rank classical Lie groups due to overwhelming computational complexity. Adams operations $\psi^l$ for $l=-1, 2, 3$ on $SU(n)$ with respect to judiciously chosen generators of $K^*(SU(n))$ were given in \cite[Proposition 3.3]{DP}.  Ever since then no further formulas for Adams operations on other classical Lie groups have appeared. However, Adams operations on the exceptional Lie groups and their eigenvectors were completely settled in \cite{D} by Davis, who also determined there the $v_1$-periodic homotopy groups of exceptional Lie groups. A localization of the actual homotopy groups, $v_1$-periodic homotopy groups indicate roughly the portion which is detected by $K$-theory and its operations (cf. \cite[Section 1]{D}). For a compact Lie group its $v_1$-periodic homotopy group is a direct summand of some actual homotopy group. Davis accomplished the computation of the $v_1$-periodic homotopy groups of exceptional Lie groups using a theorem of Bousfield (cf. \cite[Theorem 9.2]{Bou}), which roughly asserts that the $v_1$-periodic homotopy groups can be computed using Adams operations. 

The aim of this note is to complete the computation of Adams operations on compact Lie groups by building on the previous work mentioned above and give explicit general formulas for compact classical Lie groups $G=U(n)$, $SU(n)$, $Sp(n)$, $Spin(n)$, and the associated eigenvectors for $U(n)$. We take advantage of the recent results of \cite{BZ} on equivariant $K$-theory of $G$ to get the equivariant version of Adams operations on $U(n)$. The formula for the non-equivariant unitary case then drops out on applying the forgetful map. At the same time we recover the formula for $\psi^2$ in \cite[Theorem 1.2]{N}, and find that our formula does agree with the formulas, given in \cite[pp. 149-150]{N}, for $\psi^l$ on $SU(n)$ when $n$ is $3, 4$ and $5$. The symplectic and spin cases can be settled using the result in the unitary case, the functoriality of Adams operations as suggested in \cite{N}, and some representation theory. We also recover, using our results, the formulas for Adams operations on $G_2$ in \cite{W3} without appealing to the Chern character isomorphism. The formula for eigenvectors of Adams operations on $U(n)$ presented in this note is explicit in that it does not require the knowledge of other eigenvectors, while the recursive algorithm given in \cite{N} does. 

Before listing the formulas for Adams operations on $G$ in Theorems \ref{mainthm}, \ref{symplectic} and \ref{spin}, we would like to first explain and define a few notations. The map $\delta$, which maps $R(G)$ to $K^{-1}(G)$, is defined in Definition 2.1. The main theorem in \cite{Ho} (cf. Theorem \ref{K_G} (\ref{Ho})) states that the classes in $\delta(R(G))\subset K^{-1}(G)$ generate the $K$-theory ring of $G$. Because of this fact the following Theorems give a complete description of Adams operations on the ring $K^*(G)$. We use $\sigma_n$ to denote the standard representation of $n$ dimensions. 

\begin{definition}
	\begin{enumerate}
		\item For nonnegative integers $k$ and $p$ and positive integers $n$ and $l$, we define $\mu(n, l, k, p)$ to be the cardinality of the set 
	\[\{(k_1, \cdots, k_n)\in\mathbb{Z}^{\oplus n}|\ 0\leq k_r\leq l-1\text{ for }1\leq r\leq n,\ k_1+\cdots+k_n=lk-p\}.\]
			We also define
			\begin{align*}\alpha(n, l, k, p)&=\mu(n, l, k, p)+\mu(n, l, k, n-p), \text{and}\\ \beta(n, l, k, p)&=\mu(n, l, k, p)-\mu(n, l, k, n-p).\end{align*}
		\item Let $\{p_j(y)\}_{j=0}^\infty$ be the sequence of polynomials which are nonzero coefficients of the Taylor series of $\displaystyle \left(\frac{t}{\sinh t}\right)^y$, i.e.  
			\[\left(\frac{t}{\sinh t}\right)^y=\sum_{j=0}^\infty p_j(y)t^{2j}.\]
	\end{enumerate}
\end{definition}


	\begin{theorem}[(Special) unitary case]\label{mainthm} 
		\begin{enumerate}
			\item\label{unitary} For $G=U(n)$, we have
		\begin{eqnarray}\label{unitaryadamsop}\psi^l(\delta(\bigwedge\nolimits^k\sigma_n))=(-1)^k l\sum_{p=1}^{n}(-1)^{p}\mu(n, l, k, p)\delta(\bigwedge\nolimits^p\sigma_n). \end{eqnarray}
		The number $\mu(n, l, k, p)$ is equal to 
		\begin{eqnarray}\label{adamsopcoeff}\sum_{q=0}^{k-1}(-1)^q\binom{n}{q}\binom{n+l(k-q)-p-1}{n-1}.\end{eqnarray}
		In particular, when $l=2$, 
		\[\psi^2(\delta(\bigwedge\nolimits^k\sigma_n))=(-1)^k\cdot 2\sum_{p=1}^{2k}(-1)^p\binom{n}{2k-p}\delta(\bigwedge\nolimits^p\sigma_n).\]
		The formula for $G=SU(n)$ is the same except that $\delta(\bigwedge\nolimits^n\sigma_n)$ becomes 0 in this case.
			\item\label{eigenvector}  An eigenvector of $\psi^l\otimes\text{Id}_\mathbb{Q}: K^*(U(n))\otimes\mathbb{Q}\to K^*(U(n))\otimes\mathbb{Q}$ (for $l$ any integer) corresponding to the eigenvalue $l^{n-k}$, for $k=0, 1, \cdots, n-1$, is
		\begin{eqnarray}\label{eigvec}\sum_{i=1}^n (-1)^{i-1}\left(\sum_{j=0}^{\lfloor\frac{k}{2}\rfloor}\frac{p_j(n)}{(k-2j)!}(n-2i)^{k-2j}\right)\delta(\bigwedge\nolimits^i\sigma_n).\end{eqnarray}
		Moreover, the polynomial $p_j(y)$ is of degree $j$ and satisfies the recurrence relation
		\[p_0(y)=1, p_j(y)=-\frac{y}{2j}\sum_{k=1}^j\frac{2^{2k}B_{2k}}{(2k)!}p_{j-k}(y),\]
		where $B_{2k}$ is the $2k$-th Bernoulli number.
		\end{enumerate}
	\end{theorem}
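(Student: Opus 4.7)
My plan attacks Part (i) via the equivariant route indicated in the introduction and Part (ii) via the Chern character isomorphism.

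For Part (i), I would use the Brylinski-Zhang description \cite{BZ} of the equivariant $K$-theory $K_G^*(G) \cong \Omega^*_{R(G)/\mathbb{Z}}$, under which $\delta$ is the universal derivation $d$ and Adams operations extend multiplicatively by $\psi^l(df) = d(\psi^l f)$. This reduces the problem to computing $\psi^l(\bigwedge^k\sigma_n)$ in $R(U(n))$, taking $d$, and then applying the forgetful map $K_G^*\to K^*$, which amounts to augmenting $R(G)$-coefficients via $e_p \mapsto \binom{n}{p}$. Restricting to a maximal torus with weights $z_1,\ldots,z_n$, we have $\psi^l(\bigwedge^k\sigma_n) = e_k(z_1^l,\ldots,z_n^l)$. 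The key combinatorial identity is
\[
(-1)^k e_k(z_1^l,\ldots,z_n^l) \;=\; \sum_{p=0}^n (-1)^p\, e_p(z)\cdot h^{(l)}_{lk-p}(z),
\]
obtained by comparing coefficients of $t^{lk}$ in
\[
\prod_{i=1}^n (1-z_i^l t^l) \;=\; \prod_{i=1}^n (1-z_i t)\cdot \prod_{i=1}^n \bigl(1+z_i t+\cdots+z_i^{l-1}t^{l-1}\bigr),
\]
where $h^{(l)}_m(z) := \sum_{\substack{0 \le k_r \le l-1 \\ k_1+\cdots+k_n = m}} z_1^{k_1}\cdots z_n^{k_n}$ is Weyl-invariant and satisfies $h^{(l)}_{lk-p}(1,\ldots,1) = \mu(n,l,k,p)$. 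Applying $d$ to both sides and passing to non-equivariant $K$-theory, one combines the primary terms $h^{(l)}_{lk-p}\cdot de_p$ with the correction terms $e_p\cdot dh^{(l)}_{lk-p}$ under augmentation to obtain the coefficient $(-1)^{k+p}l\,\mu(n,l,k,p)$ on $\delta(\bigwedge^p\sigma_n)$, giving (\ref{unitaryadamsop}). The closed form (\ref{adamsopcoeff}) is then immediate from $\bigl(\frac{1-t^l}{1-t}\bigr)^n = (1-t^l)^n(1-t)^{-n}$ via the binomial theorem; the $l=2$ specialization and the $SU(n)$ case (where $\bigwedge^n\sigma_n = \det \equiv 1$ forces $\delta(\bigwedge^n\sigma_n) = 0$) drop out directly.

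For Part (ii), I would use the Chern character isomorphism $\mathrm{ch}\colon K^*(U(n))\otimes\mathbb{Q} \to H^*(U(n);\mathbb{Q})$, under which $\psi^l$ acts as multiplication by $l^j$ on $K^{-1}$ classes whose Chern character lies in $H^{2j-1}$. Thus an eigenvector with eigenvalue $l^{n-k}$ is any rational combination of the $\delta(\bigwedge^i\sigma_n)$ whose Chern character is the primitive generator of $H^{2(n-k)-1}(U(n);\mathbb{Q})$. I would compute $\mathrm{ch}(\delta(\bigwedge^i\sigma_n))$ via the splitting principle ($\sigma_n$ has formal Chern roots $x_1,\ldots,x_n$, so $\mathrm{ch}(\bigwedge^i\sigma_n) = e_i(e^{x_1},\ldots,e^{x_n})$) together with the transgression that sends $K^0$-classes on $BU(n)$ to $K^{-1}$-classes on $U(n)$, expressing the answer in the primitive generators of $H^*(U(n);\mathbb{Q})$. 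The shifts $n-2i$ surface naturally from manipulating $\log \prod_j (1+t e^{x_j}) = \sum_k \frac{(-1)^{k-1}t^k}{k}\sum_j e^{kx_j}$; the factors $(n-2i)^{k-2j}/(k-2j)!$ come from Taylor expansion of $e^{(n-2i)x}$; and the coefficients $p_j(n)$ enter upon inverting the resulting linear system, because the inversion formally involves $\bigl(\frac{t}{\sinh t}\bigr)^n = \sum_j p_j(n) t^{2j}$. Finally, the recurrence for $p_j(y)$ follows by writing $(t/\sinh t)^y = \exp\bigl(-y\log(\sinh t/t)\bigr)$ and using the Bernoulli expansion $\log(\sinh t/t) = \sum_{k \ge 1}\frac{2^{2k-1}B_{2k}}{k(2k)!}t^{2k}$, then matching Taylor coefficients in $t^{2j}$.

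The main technical obstacle is the combinatorial bookkeeping in Part (i): one must verify that the primary and correction terms in the differential of the key identity combine, after augmentation, to produce exactly the prefactor $l$ in $(-1)^{k+p}l\,\mu(n,l,k,p)$. For Part (ii), the delicate points are the correct identification of $\mathrm{ch}(\delta(\bigwedge^i\sigma_n))$ in the primitive basis and clean inversion of the resulting linear system so that the $(t/\sinh t)^n$ generating series surfaces with the stated coefficients.
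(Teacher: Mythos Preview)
Your plan for Part~(i) is close to the paper's, and the key generating-function identity
\[
\prod_i(1-z_i^l t^l)=\prod_i(1-z_it)\cdot\prod_i(1+z_it+\cdots+z_i^{l-1}t^{l-1})
\]
is exactly what the paper exploits. The difference is in how the overall factor $l$ appears. You propose using $\psi^l(df)=d(\psi^l f)$ in $\Omega^*_{R(G)/\mathbb{Z}}$, applying $d$ to the identity, and then arguing that the ``primary'' terms $h^{(l)}_{lk-p}\,de_p$ and ``correction'' terms $e_p\,dh^{(l)}_{lk-p}$ merge under augmentation into $l\cdot\mu(n,l,k,p)$. You flag this merger as the main technical obstacle but do not carry it out; it amounts to the identity $[dC]_{t^{lk}}=l\,[B\,dA]_{t^{lk}}$, which is true (because $[C]_m=0$ unless $l\mid m$) but is exactly the nontrivial computation. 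The paper sidesteps this entirely: it works in $K_T^*(T)\cong R(T)\otimes K^*(T)$, where $\psi^l(t_i)=lt_i$ produces the factor $l$ immediately, and then rewrites $\sum\lambda_{i_1}^l\cdots\lambda_{i_k}^l\otimes(t_{i_1}+\cdots+t_{i_k})$ directly as an $R(T)^W$-combination of the $p_G^*\delta_G(\bigwedge^p\sigma_n)$ (Propositions~\ref{sympoly} and~\ref{adams_op_comp}). So your packaging introduces an extra bookkeeping step that you leave open, whereas the paper's choice of working with the split classes $\lambda_i\otimes t_i$ makes it disappear.

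For Part~(ii) your route is genuinely different from the paper's and is precisely the one the paper warns against in the introduction (``It would be difficult to extend this technique\ldots due to overwhelming computational complexity''). The paper does \emph{not} compute Chern characters or invert any linear system. Instead it verifies directly that the proposed vector is an eigenvector of $\psi^2$ (which suffices since the $\psi^l$ are simultaneously diagonalizable): applying the explicit $\psi^2$ formula from Part~(i), the coefficient of $\delta(\bigwedge^p\sigma_n)$ is recognized as the $t^k$-coefficient of $\left(\frac{t}{\sinh t}\right)^n\sinh((n-2p)t)$ (or $\cosh$ for even $k$), and a hyperbolic-function manipulation involving $(1\pm e^{\pm t})^n$ shows this rescales by $2^{n-k}$. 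Your sketch, by contrast, asserts that the generating function $(t/\sinh t)^n$ ``surfaces'' upon inverting the Chern-character matrix, but gives no mechanism for this; the steps ``compute $\mathrm{ch}(\delta(\bigwedge^i\sigma_n))$ in the primitive basis'' and ``cleanly invert'' are the entire difficulty, and nothing you write indicates how the expression~(\ref{eigvec}) would actually emerge from them. The recurrence for $p_j(y)$ is handled the same way in both: differentiate $\exp(y\log(t/\sinh t))$ and use the Bernoulli expansion of $\coth t$.
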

	\begin{theorem}[Symplectic case]\label{symplectic} For $G=Sp(n)$, $l>0$ and $1\leq k\leq n$, we have
	\begin{align*}
		\psi^l(\delta(\bigwedge\nolimits^k\sigma_{2n}))&=(-1)^kl \left(\sum_{p=1}^{n-1}(-1)^p\alpha(2n, l, k, p)\delta(\bigwedge\nolimits^p\sigma_{2n})\right.\\
												&+(-1)^n\mu(2n, l, k, n)\delta(\bigwedge\nolimits^n\sigma_{2n})\Bigg).
	\end{align*}
	\end{theorem}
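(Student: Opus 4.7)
The plan is to deduce the symplectic formula from the unitary case (Theorem \ref{mainthm}(\ref{unitary})) by pulling back along the standard inclusion $i \colon Sp(n) \hookrightarrow U(2n)$. Naturality of Adams operations and of the map $\delta$ implies that the induced ring map $i^* \colon K^*(U(2n)) \to K^*(Sp(n))$ commutes with $\psi^l$ and sends $\delta(V) \mapsto \delta(V|_{Sp(n)})$. By Theorem \ref{K_G}(\ref{Ho}), the classes $\delta(\bigwedge\nolimits^k \sigma_{2n}|_{Sp(n)})$ for $1 \leq k \leq n$ generate $K^*(Sp(n))$, so knowing $\psi^l$ on these suffices. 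First I would apply $i^*$ to both sides of \eqref{unitaryadamsop} (with $n$ replaced by $2n$) and then simplify.

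The representation-theoretic input that drives the simplification is the symplectic self-duality of $\sigma_{2n}|_{Sp(n)}$. Since the defining representation of $Sp(n)$ carries an invariant symplectic form and has trivial determinant (as $Sp(n) \subset SU(2n)$), the general identity $(\bigwedge^p V)^* \cong \bigwedge^{d-p} V \otimes (\det V)^{-1}$ combined with $V \cong V^*$ yields
\[ \bigwedge\nolimits^p \sigma_{2n}|_{Sp(n)} \cong \bigwedge\nolimits^{2n-p} \sigma_{2n}|_{Sp(n)} \]
as $Sp(n)$-representations. I would use this in two ways: to identify $\delta(\bigwedge\nolimits^p \sigma_{2n})$ with $\delta(\bigwedge\nolimits^{2n-p} \sigma_{2n})$ in $K^*(Sp(n))$ for $1 \leq p \leq n-1$, and to observe that the $p = 2n$ term reduces to $\delta$ of a trivial representation and hence vanishes.

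With these identifications, the $p$ and $2n-p$ summands in the pullback of \eqref{unitaryadamsop} coalesce. Since $(-1)^{2n-p} = (-1)^p$, the combined coefficient of $\delta(\bigwedge\nolimits^p \sigma_{2n})$ for $1 \leq p \leq n-1$ becomes
\[ (-1)^p \bigl[\mu(2n, l, k, p) + \mu(2n, l, k, 2n-p)\bigr] = (-1)^p \alpha(2n, l, k, p), \]
while the unpaired middle index $p = n$ contributes $(-1)^n \mu(2n, l, k, n) \delta(\bigwedge\nolimits^n \sigma_{2n})$. Multiplying by the overall $(-1)^k l$ reproduces the stated formula. The main step requiring attention is not computational but structural: checking the self-duality and naturality statements above, together with invoking the Hodgkin generation theorem, after which the rest is essentially bookkeeping. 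I would not anticipate a substantive obstacle beyond carefully assembling these ingredients.
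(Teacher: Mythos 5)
Your proof is correct and follows essentially the same route as the paper: pull back the unitary formula along $Sp(n) \hookrightarrow U(2n)$, invoke naturality of $\psi^l$ and $\delta$, and use the isomorphism $\bigwedge^p\sigma_{2n} \cong \bigwedge^{2n-p}\sigma_{2n}$ of $Sp(n)$-representations to pair up terms into the $\alpha$-coefficients. The paper states this in one sentence; you have simply spelled out the self-duality justification and the bookkeeping that the paper leaves implicit.
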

	\begin{theorem}[Spin case]\label{spin}
		\begin{enumerate}
			\item\label{spinodd} For $G=Spin(2n+1)$, $l>0$ and $1\leq k\leq n-1$, we have
				\begin{align*}
										&\psi^l(\delta(\bigwedge\nolimits^k\sigma_{2n+1}))\\
										  =&(-1)^k l\left(\sum_{p=1}^{n-1}((-1)^p\beta(2n+1, l, k, p)-(-1)^n\beta(2n+1, l, k, n))\delta(\bigwedge\nolimits^p\sigma_{2n+1})\right.\\
										  &+(-1)^n\cdot 2^{n+1}\beta(2n+1, l, k, n)\delta(S)\Bigg),\text{ and}\\
	                                                     			&\psi^l(\delta(S))\\
	                                                     			=&\frac{l}{2^{n+1}}\sum_{p=1}^{n-1}\sum_{k=1}^n(-1)^k((-1)^{p}\beta(2n+1, l, k, p)-(-1)^{n}\beta(2n+1, l, k, n))\delta(\bigwedge\nolimits^p\sigma_{2n+1})\\
				 						 &+l\sum_{k=1}^n(-1)^{k+n}\beta(2n+1, l, k, n)\delta(S), 
				\end{align*}	
				where $S$ is the spin representation. 
			\item\label{spineven} For $G=Spin(2n)$ and $1\leq k\leq n-2$, we have
			\begin{align*}
													&\psi^l(\delta(\bigwedge\nolimits^k\sigma_{2n}))\\
												      =&(-1)^{k+n}l\left(\sum_{p\geq 1}((\alpha(2n, l, k, n-2p)\right.-2\mu(2n, l, k, n))\delta(\bigwedge\nolimits^{n-2p}\sigma_{2n})\\
												      &-(\alpha(2n, l, k, n-2p-1)-\alpha(2n, l, k, n-1))\delta(\bigwedge\nolimits^{n-1-2p}\sigma_{2n}))\\
												      &-2^{n-1}(\alpha(2n, l, k, n-1)-2\mu(2n, l, k, n))(\delta(S_+)+\delta(S_-))\Bigg), \\
						&\psi^l(\delta(S_\pm))\\
					       =&\left(\frac{1}{2}\right)^{n}l\sum_{k\geq 0}\sum_{p\geq 1}((\alpha(2n, l, n-2k-1, n-2p-1)\\
					       &-\alpha(2n, l, n-2k-1, n-1))\delta(\bigwedge\nolimits^{n-2p-1}\sigma_{2n})\\
					       &-(\alpha(2n, l, n-2k-1, n-2p)-2\mu(2n, l, n-2k-1, n))\delta(\bigwedge\nolimits^{n-2p}\sigma_{2n}))\\
					       &+\frac{l}{2}\sum_{k\geq 0}((\alpha(2n, l, n-2k-1, n-1)-2\mu(2n, l, n-2k-1, n)\pm l^{n-1})\delta(S_+)\\
					       &+(\alpha(2n, l, n-2k-1, n-1)-2\mu(2n, l, n-2k-1, n)\mp l^{n-1})\delta(S_-)),
		                  \end{align*}	
where $S_+$ and $S_-$ are positive and negative half spin representations respectively. 
		\end{enumerate}
	\end{theorem}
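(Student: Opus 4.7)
The plan is to exploit the functoriality of Adams operations under the composition $Spin(m)\to SO(m)\hookrightarrow U(m)$, pull Theorem \ref{mainthm} back to $Spin(m)$, and then rewrite the result in terms of the standard generators of $K^*(Spin(m))$ using representation-theoretic decompositions. The linchpin is a standard consequence of Hodgkin's theorem: the map $\delta\colon R(G)\to K^{-1}(G)$ factors through $IR(G)/IR(G)^2$, where $IR(G)$ denotes the augmentation ideal. Consequently,
\[\delta(V\otimes W) = \dim(V)\,\delta(W) + \dim(W)\,\delta(V)\]
for all representations $V, W$, and this derivation property is what converts representation-ring decompositions into linear relations in $K^{-1}$.

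For part (\ref{spinodd}), I will pull formula (\ref{unitaryadamsop}) back along $Spin(2n+1)\to U(2n+1)$ to obtain
\[\psi^l(\delta(\bigwedge\nolimits^k\sigma_{2n+1})) = (-1)^k l\sum_{p=1}^{2n+1}(-1)^p\mu(2n+1,l,k,p)\,\delta(\bigwedge\nolimits^p\sigma_{2n+1}).\]
In $R(Spin(2n+1))$ the representation $\sigma_{2n+1}$ is self-dual, so $\bigwedge\nolimits^p\sigma_{2n+1}\cong\bigwedge\nolimits^{2n+1-p}\sigma_{2n+1}$, and the top exterior power is trivial. Folding $p\leftrightarrow 2n+1-p$ collapses the sum to $p=1,\dots,n$ with coefficient $(-1)^p\beta(2n+1,l,k,p)$. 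To eliminate the middle term $\delta(\bigwedge\nolimits^n\sigma_{2n+1})$, I will apply $\delta$ to the classical decomposition $S\otimes S\cong\bigoplus_{j=0}^n\bigwedge\nolimits^j\sigma_{2n+1}$; the derivation property above then yields $2^{n+1}\delta(S)=\sum_{j=1}^n\delta(\bigwedge\nolimits^j\sigma_{2n+1})$, and solving for $\delta(\bigwedge\nolimits^n\sigma_{2n+1})=2^{n+1}\delta(S)-\sum_{p=1}^{n-1}\delta(\bigwedge\nolimits^p\sigma_{2n+1})$ and substituting produces the stated formula. The formula for $\psi^l(\delta(S))$ then follows by applying $\psi^l$ to the identity $2^{n+1}\delta(S)=\sum_{k=1}^n\delta(\bigwedge\nolimits^k\sigma_{2n+1})$ and inserting the formula just derived term by term.

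Part (\ref{spineven}) proceeds along the same plan but is genuinely more involved. Pulling back from $U(2n)$ and folding $p\leftrightarrow 2n-p$ now produces the symmetric combination $\alpha$ rather than $\beta$, because $(-1)^p=(-1)^{2n-p}$. The new feature is that $\bigwedge\nolimits^n\sigma_{2n}\cong\bigwedge\nolimits^n_+\sigma_{2n}\oplus\bigwedge\nolimits^n_-\sigma_{2n}$ in $R(Spin(2n))$, and I will use the tensor-product decompositions
\[S_\pm\otimes S_\pm^*\cong\bigoplus_{k\text{ even},\,k<n}\bigwedge\nolimits^k\sigma_{2n}\ \oplus\ \bigwedge\nolimits^n_\pm\sigma_{2n},\qquad S_+\otimes S_-^*\cong\bigoplus_{k\text{ odd}}\bigwedge\nolimits^k\sigma_{2n},\]
together with the derivation property, to express $\delta(\bigwedge\nolimits^n_\pm\sigma_{2n})$, and hence $\delta(\bigwedge\nolimits^n\sigma_{2n})$, in terms of $\delta(S_+)+\delta(S_-)$ and lower wedges. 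Applying $\psi^l$ to these relations and solving the resulting $2\times 2$ linear system for $\psi^l(\delta(S_\pm))$ individually produces the $\pm l^{n-1}$ terms: these are the eigenvalues that distinguish the antisymmetric combination $\delta(S_+)-\delta(S_-)$ from the symmetric combination $\delta(S_+)+\delta(S_-)$, arising from a direct weight computation on the maximal torus where $S_+-S_-$ behaves like a ``chirality'' class.

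The main obstacle is the bookkeeping in the even case. The reality structure of $S_\pm$ depends on $n\bmod 4$ (so the dual $S_\pm^*$ equals $S_\pm$ or $S_\mp$ accordingly), and for even $l$ the Adams operation $\psi^l$ can interchange $S_+$ and $S_-$. One therefore has to handle the parities of $n$ and $l$ with care while inverting the decomposition identities to read off the clean coefficients of $\delta(S_+)$ and $\delta(S_-)$ in the stated formula. Once the right linear combinations of the tensor-product identities are identified, the remainder reduces to formal manipulation of the combinatorial quantities $\mu$, $\alpha$, and $\beta$.
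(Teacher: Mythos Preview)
Your treatment of $Spin(2n+1)$ is essentially the paper's proof: pull back (\ref{unitaryadamsop}) along $Spin(2n+1)\to U(2n+1)$, fold via $\bigwedge\nolimits^p\sigma_{2n+1}\cong\bigwedge\nolimits^{2n+1-p}\sigma_{2n+1}$, and substitute the relation coming from $S^{\otimes 2}\cong\bigoplus_{p=0}^n\bigwedge\nolimits^p\sigma_{2n+1}$. The formula for $\psi^l(\delta(S))$ is then obtained exactly as you say.

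For $Spin(2n)$, your outline for $\psi^l(\delta(\bigwedge\nolimits^k\sigma_{2n}))$ is also correct, though the paper works with $S_+^{\otimes 2}\oplus S_-^{\otimes 2}$ and $S_+\otimes S_-$ rather than $S_\pm\otimes S_\pm^*$; this sidesteps the $n\bmod 4$ duality bookkeeping you flag and yields (\ref{dsumofsq}) and (\ref{dtensorprodpm}) directly.

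The genuine gap is in your computation of $\psi^l(\delta(S_\pm))$. After applying $\delta$, your tensor-product identities determine only the \emph{symmetric} combination $\delta(S_+)+\delta(S_-)$ in terms of the exterior-power generators; the antisymmetric piece is tied to $\delta(\bigwedge\nolimits^n_+\sigma_{2n})-\delta(\bigwedge\nolimits^n_-\sigma_{2n})$, and neither of these summands is an exterior power of $\sigma_{2n}$, so the pullback from $U(2n)$ gives you nothing about $\psi^l$ on them. There is no $2\times 2$ system here to solve: you have two equations for one unknown (the sum) and no equation for the difference. Your appeal to a ``direct weight computation on the maximal torus where $S_+-S_-$ behaves like a chirality class'' is not an argument; the character $\prod_i(z_i-z_i^{-1})$ lives in $R(T)$, not in $K_T^{-1}(T)$, and turning that observation into the eigenvalue statement for $\delta(S_+)-\delta(S_-)$ requires real work. (Your worry that ``for even $l$ the Adams operation can interchange $S_+$ and $S_-$'' is also a red herring.)

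The paper closes this gap differently. It uses the identity $\bigwedge\nolimits^2 S_+\cong\bigwedge\nolimits^2 S_-\cong\bigoplus_{i\ge 0}\bigwedge\nolimits^{n-4i-2}\sigma_{2n}$ (cf.\ \cite[p.~63]{Bot}) together with $\psi^2(\delta(S_\pm))=2^n\delta(S_\pm)-2\delta(\bigwedge\nolimits^2 S_\pm)$ to obtain $\psi^2(\delta(S_+)-\delta(S_-))=2^n(\delta(S_+)-\delta(S_-))$ on the nose. Iterating gives the same relation for $\psi^{2^k}$, and then Proposition~\ref{polygrowth} (polynomiality of the matrix entries in $l$) plus Lagrange interpolation forces $\psi^l(\delta(S_+)-\delta(S_-))=l^n(\delta(S_+)-\delta(S_-))$ for all $l$. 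Combining this with the formula for $\psi^l(\delta(S_+)+\delta(S_-))$, which does follow from (\ref{dtensorprodpma}) and the unitary formula as you indicate, yields the stated result. You should replace the hand-wave with this argument, or supply an honest torus computation that produces the eigenvalue $l^n$ for the difference.
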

	In light of the aforementioned results in \cite{Bou} and \cite{D}, we hope that our formulas will be of interest for future research on the alternative way of using Adams operations to find $v_1$-periodic homotopy groups of classical compact Lie groups (earlier results on those homotopy groups, obtained by means of homotopy-theoretic and unstable Novikov spectral sequence methods, can be found in the references therein). 
	
	\textbf{Acknowledgments}\ \ We would like to thank Donald Davis for answering questions and giving comments on the first draft of this note. We are also grateful to Nan-Kuo Ho, Michael Mandell, Reyer Sjamaar and the anonymous referee for suggestions for improving the exposition of this note.
	
\section{$G=U(n)$ or $SU(n)$}

Throughout this paper, $K_G^*(X)$ denotes the $\mathbb{Z}/2$-graded equivariant complex $K$-theory of a compact Hausdorff $G$-space $X$. We shall first give a quick review of the results from \cite{Ho} and \cite{BZ} on the (equivariant) $K$-theory of a compact Lie group $G$ with conjugation action by itself. 

\begin{definition}[\cite{BZ}\footnote{The definition of $\delta_G$ first appeared in \cite{BZ} but is incorrect. Here we use a corrected version as in \cite[Proposition 2.2 and Definition 2.5]{F}.}] Let $\delta_G: R(G)\to K^{-1}_G(G)$ be the map which sends a representation $\rho$ to the following complex of equivariant $G$-vector bundles
\begin{align*}
	0\longrightarrow G\times\mathbb{R}\times V&\longrightarrow G\times\mathbb{R}\times V\longrightarrow 0, \\
	(g, t, v)&\mapsto (g, t, -t\rho(g)v)\ \ \text{if }t\geq 0,\ \textnormal{and} \\
	(g, t, v)&\mapsto (g, t, tv)\ \ \text{if }t\leq 0, 
\end{align*}
where $V$ is the underlying complex vector space of $\rho$. Similarly, we also define the non-equivariant version $\delta: R(G)\to K^{-1}(G)$ which is $\delta_G$ composed with the forgetful map $K_G^*(G)\to K^*(G)$.
\end{definition}
The map $\delta_G$ (resp. $\delta$) is a derivation of $R(G)$ taking values in $K_G^{-1}(G)$ as an $R(G)$-module (resp. taking values in $K^{-1}(G)$ as an $R(G)$-module whose module structure is given by the augmentation map). See \cite[Equation (2)]{Ho} and \cite{BZ}. 
\begin{theorem}\label{K_G}
	If $G$ is a compact connected Lie group with torsion-free fundamental group, $T$ a maximal torus and $W$ the Weyl group, then we have the following.
	\begin{enumerate}
		\item\label{Ho}\cite[Theorem A(iii) and (v)]{Ho} There is a ring isomorphism
		\[K^*(G)\cong\bigwedge_\mathbb{Z}\nolimits^*\text{Im}(\delta).\]
		In particular, if $G$ is simply-connected of rank $r$, then
		\[K^*(G)\cong \bigwedge_\mathbb{Z}\nolimits^*(\delta(\rho_1), \cdots, \delta(\rho_r)),\]
		where $\rho_1, \cdots, \rho_r$ are fundamental representations of $G$.
		\item\label{inj}\cite{BZ} The restriction map 
		\[p_G^*: K_G^*(G)\to K_T^*(T)\]
		is injective and $\text{Im}(p_G^*)=K_T^*(T)^W$. 
		\item\cite{BZ} Let $\Omega^*_{R(G)/\mathbb{Z}}$ be the ring of K\"ahler differentials of $R(G)$ over $\mathbb{Z}$, and $\varphi: \Omega_{R(G)/\mathbb{Z}}^*\to K_G^*(G)$ be the $R(G)$-algebra homomorphism defined by 
			\begin{enumerate}
				\item $\varphi(\rho_V):=[G\times V]\in K_G^0(G)$, and
				\item $\varphi(d\rho_V)=\delta_G(\rho_V)$.
			\end{enumerate}
			Then $\varphi$ is an isomorphism.
	\end{enumerate}
\end{theorem}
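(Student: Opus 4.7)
The plan is to prove the three parts separately, noting that parts (ii) and (iii) are essentially due to Brylinski--Zhang while (i) is Hodgkin's classical theorem, so the sketch amounts to indicating how each of these is established from first principles.

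For part (i), I would run the Hodgkin spectral sequence. For simply-connected $G$, this spectral sequence identifies $K^*(G)$ with $\mathrm{Tor}^{R(G)}_*(\mathbb{Z},\mathbb{Z})$, where $\mathbb{Z}$ is the augmentation module. Since $R(G)$ is a polynomial ring on the fundamental representations $\rho_1,\dots,\rho_r$, the Koszul resolution yields $\mathrm{Tor}^{R(G)}_*(\mathbb{Z},\mathbb{Z}) \cong \bigwedge^*_{\mathbb{Z}}(\beta_1,\dots,\beta_r)$, with each $\beta_i$ matching up to $\delta(\rho_i)$ under the edge map; this is most easily checked using that $\delta$ is a derivation with respect to the $R(G)$-module structure on $K^{-1}(G)$. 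For $G$ merely having torsion-free $\pi_1$, one writes $G \simeq G' \times T^s$ with $G'$ semisimple simply-connected and applies the Künneth theorem in $K$-theory, reducing to the simply-connected case plus the well-known $K^*(T^s) \cong \bigwedge^* \mathbb{Z}^s$.

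For part (ii), I would exploit the fibration $G/T \to \mathrm{pt}$ equivariantly. The equivariant $K$-theory $K_G^*(G/T) \cong R(T)$ is free over $R(G)$ of rank $|W|$, and the classical argument (as for $R(G) \hookrightarrow R(T)$) transplants: the composition $K_G^*(G) \xrightarrow{p_G^*} K_T^*(T) \to K_T^*(T)$ followed by averaging over $W$ recovers the identity, so $p_G^*$ is injective and its image lies in $K_T^*(T)^W$. Equality with the invariants would be shown by a dimension/rank count after localization at a regular point of $T$, invoking the localization theorem in equivariant $K$-theory.

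For part (iii), I would first verify that the assignments $\rho_V \mapsto [G\times V]$ and $d\rho_V \mapsto \delta_G(\rho_V)$ extend to a well-defined $R(G)$-algebra map $\varphi$ from $\Omega^*_{R(G)/\mathbb{Z}}$: this follows from the universal property of K\"ahler differentials together with the fact that $\delta_G$ is a derivation of $R(G)$ valued in $K_G^{-1}(G)$. To show $\varphi$ is an isomorphism, I would invoke part (ii) and reduce to the torus case. For $G=T$, the K\"ahler differentials of $R(T) = \mathbb{Z}[x_1^{\pm 1},\dots,x_r^{\pm 1}]$ give precisely an exterior algebra on $dx_1/x_1,\dots,dx_r/x_r$ over $R(T)$, matching the explicit description of $K_T^*(T)$ via Bott periodicity. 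Taking $W$-invariants on both sides and comparing with $K_G^*(G) \cong K_T^*(T)^W$ from part (ii) finishes the argument.

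The main obstacle I anticipate is the identification of the image of $p_G^*$ in part (ii) as exactly the Weyl invariants: while injectivity is formal, surjectivity onto $K_T^*(T)^W$ requires a genuinely non-trivial argument using either an averaging operator built from pushforward along $G/T \to \mathrm{pt}$ or a localization computation, and it is this step on which the K\"ahler-differential isomorphism in part (iii) ultimately rests.
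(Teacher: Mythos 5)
This theorem is pure background in the paper: parts (i)--(iii) are quoted from Hodgkin \cite{Ho} and Brylinski--Zhang \cite{BZ} with no proof supplied, so there is no internal argument to measure your sketch against. Taken on its own, your outline has the right overall architecture (Hodgkin/K\"unneth spectral sequence for (i), a transfer argument plus identification of Weyl invariants for (ii), reduction to the torus for (iii)), but it contains one step that is simply false and hides the genuinely hard points behind one-line assertions.

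The false step is the reduction ``$G\simeq G'\times T^s$ with $G'$ semisimple simply-connected'' for $G$ with torsion-free fundamental group. The principal example of this paper, $U(n)$, has $\pi_1=\mathbb{Z}$ but is \emph{not} isomorphic as a Lie group to $SU(n)\times S^1$ (compare centers: $U(1)$ versus $\mathbb{Z}/n\times S^1$); in general one only has $G\cong(G'\times T^s)/F$ for a finite central subgroup $F$. A space-level homotopy equivalence would let you compute the ring $K^*(G)$ by K\"unneth, but it does not intertwine $R(G)$ with $R(G'\times T^s)$, and the content of Hodgkin's Theorem A is precisely the identification of the exterior generators with classes $\delta(\rho)$ for representations $\rho$ of $G$ itself; handling the non-simply-connected case is the delicate part of \cite{Ho}, not a formal reduction. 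Two further points you defer are substantive: first, the collapse of the spectral sequence from $\mathrm{Tor}^{R(G)}_*(\mathbb{Z},\mathbb{Z})$ to $K^*(G)$, together with the absence of additive and multiplicative extension problems, is the real theorem in part (i) --- the Koszul computation of $\mathrm{Tor}$ is the easy half. Second, in part (ii) the equality $\mathrm{Im}(p_G^*)=K_T^*(T)^W$ is an integral statement, and localization at a regular element only gives information after inverting elements of $R(G)$; moreover your proposed logical order (establish (ii) first, then deduce (iii) by taking $W$-invariants of $\Omega^*_{R(T)/\mathbb{Z}}$) silently uses the nontrivial Solomon-type fact that $(\Omega^*_{R(T)/\mathbb{Z}})^W\cong\Omega^*_{R(T)^W/\mathbb{Z}}$ for the reflection action of $W$, whereas in \cite{BZ} the invariance statement and the K\"ahler-differential isomorphism are established together rather than one being derived from the other.
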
 

\begin{remark}\label{primindecomp}
	Let $I$ be the augmentation ideal of $R(G)$. So $I/I^2$ is the free $\mathbb{Z}$-module of `indecomposables'. Theorem \ref{K_G} (\ref{Ho}) and the derivation property of $\delta$ imply that $\delta$ induces an isomorphism between $I/I^2$ and the free $\mathbb{Z}$-module generated by the primitive elements of $K^*(G)$. 
\end{remark}
Now let us consider Adams operations in the case $G=U(n)$. Let $\sigma_n$ be its standard representation. Recall that $R(U(n))\cong\mathbb{Z}[\sigma_n, \bigwedge\nolimits^2\sigma_n, \cdots, \bigwedge\nolimits^n\sigma_n, \bigwedge\nolimits^n\overline{\sigma_n}]$. By Theorem \ref{K_G}, $\delta_G(\sigma_n), \cdots, \delta_G(\bigwedge\nolimits^n\sigma_n)$ (resp. $\delta(\sigma_n), \cdots, \delta(\bigwedge\nolimits^n\sigma_n)$) are primitive generators of $K_{U(n)}^*(U(n))$ (resp, $K^*(U(n))$). Our strategy for computing Adams operations on $K^*(U(n))$ is to first compute those on $K_{U(n)}^*(U(n))$ or, equivalently, on the generators $\delta_G(\sigma_n), \cdots, \delta_G(\bigwedge\nolimits^n\sigma_n)$, and then apply the forgetful map. We choose to compute Adams operations via the equivariant calculation in view of Theorem \ref{K_G} (\ref{inj}), which enables us to pass the computation to $K_T^*(T)$, as well as the following `splitting principle' which helps simplify the computations.

\begin{proposition}
	Let $\lambda_1, \cdots, \lambda_n$ be the standard basis vectors of the weight lattice of $T$. Viewing $K_T^*(T)$ as $R(T)\otimes K^*(T)$, we have
	\[p_G^*\delta_G(\bigwedge\nolimits^k\sigma_n)=\sum_{1\leq i_1<\cdots<i_k\leq n}\lambda_{i_1}\cdots\lambda_{i_k}\otimes(t_{i_1}+\cdots+t_{i_k}),\]
	where $t_i=\delta(\lambda_i)$. 
\end{proposition}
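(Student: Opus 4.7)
The plan is to reduce the computation to the one-dimensional base case $\delta_T(\lambda_i) = \lambda_i \otimes t_i$, and combine this with naturality and the Leibniz rule. First, from the explicit bundle-theoretic definition of $\delta_G$, it is immediate that the construction is natural with respect to restriction of both the acting group and the underlying space, so $p_G^* \circ \delta_G = \delta_T \circ \mathrm{res}_T^G$ as maps $R(G) \to K_T^{-1}(T)$. Since $\sigma_n|_T = \lambda_1 + \cdots + \lambda_n$, its $k$-th exterior power decomposes into one-dimensional weight spaces as
\[ \bigwedge\nolimits^k \sigma_n \Big|_T = \sum_{1 \leq i_1 < \cdots < i_k \leq n} \lambda_{i_1}\cdots\lambda_{i_k}, \]
so the problem reduces to computing $\delta_T$ on each monomial $\lambda_{i_1}\cdots\lambda_{i_k}$.

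Next, I would invoke the derivation property of $\delta_T$ iteratively. Granting momentarily that $\delta_T(\lambda_i) = \lambda_i \otimes t_i$ under the K\"unneth decomposition $K_T^*(T) \cong R(T) \otimes K^*(T)$ (valid because $T$ acts trivially on itself by conjugation), the Leibniz rule yields
\begin{align*}
\delta_T(\lambda_{i_1}\cdots\lambda_{i_k}) &= \sum_{j=1}^k \Big(\prod_{j' \neq j} \lambda_{i_{j'}}\Big)\, \delta_T(\lambda_{i_j}) \\
&= \sum_{j=1}^k (\lambda_{i_1}\cdots\lambda_{i_k}) \otimes t_{i_j} \\
&= \lambda_{i_1}\cdots\lambda_{i_k} \otimes (t_{i_1} + \cdots + t_{i_k}).
\end{align*}
Summing over all $k$-element subsets and applying the naturality identity then produces the claimed formula.

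The main obstacle is establishing the base-case identity $\delta_T(\lambda_i) = \lambda_i \otimes t_i$. I would verify this from the definition itself: since $\lambda_i$ factors through the projection onto the $i$-th circle factor, the defining complex for $\delta_T(\lambda_i)$ pulls back from the corresponding rank-one complex, so by functoriality it suffices to treat the case $T = U(1)$. There, the complex differs from its untwisted counterpart by multiplying the clutching data on the $t \geq 0$ half by the character $\lambda$; under the K\"unneth splitting this twist records a factor of $\lambda \in R(T)$ tensored with the underlying non-equivariant class $\delta(\lambda) = t \in K^{-1}(S^1)$. Alternatively, one can appeal to the isomorphism $\varphi$ of Theorem \ref{K_G}(iii), which sends $d\lambda_i$ to $\delta_T(\lambda_i)$, and compute the right-hand side directly on the K\"ahler-differentials side of $R(T) = \mathbb{Z}[\lambda_1^{\pm 1}, \ldots, \lambda_n^{\pm 1}]$ to confirm the same value.
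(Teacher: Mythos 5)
Your approach is essentially the same as the paper's: restrict to $T$, decompose $\bigwedge^k\sigma_n|_T$ into one-dimensional weight pieces, and compute $\delta_T$ on each weight, with the answer landing in $R(T)\otimes K^{*}(T)$ via the K\"unneth splitting. The paper phrases the decomposition at the level of the explicit complex of equivariant bundles and then says the result ``follows immediately''; you fill in the implicit content by separating out the naturality $p_G^*\circ\delta_G=\delta_T\circ\mathrm{res}_T^G$, the additivity of $\delta_T$, the Leibniz rule, and the base case $\delta_T(\lambda_i)=\lambda_i\otimes t_i$ (which is exactly what makes the paper's ``immediate'' step work). The two arguments are the same in substance.
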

\begin{proof}
	The complex of equivariant $G$-vector bundles representing $\delta_G(\bigwedge\nolimits^k\sigma_n)$ is decomposed, on restriction to $T$, into a direct sum of complexes of 1-dimensional equivariant $T$-vector bundles, each of which corresponds to a weight of $\bigwedge\nolimits^k\sigma_n$. The Proposition then follows immediately. 
\end{proof}

By definition of Adams operations, $\psi^l(\lambda_i)=\lambda_i^l$ and $\psi^l(t_i)=lt_i$. The next task is to rewrite 
\[\psi^l(p_G^*\delta_G(\bigwedge\nolimits^k\sigma_n))=l\sum_{1\leq i_1<\cdots<i_k\leq n}\lambda_{i_1}^l\cdots\lambda_{i_k}^l\otimes(t_{i_1}+\cdots+t_{i_k})\]
as a linear combination of $p_G^*\delta_G(\sigma_n), \cdots, p_G^*\delta_G(\bigwedge\nolimits^n\sigma_n)$ with coefficients in $R(U(n))\cong R(T)^{S_n}$. 

\begin{proposition}\label{sympoly}
		We have, in $R(T)\otimes K^*(T)$, 
		\begin{eqnarray}
			p_G^*\delta_G(\bigwedge\nolimits^k\sigma_n)=\sum_{i=1}^n\sum_{j=1}^k(-1)^{j+1}s_{k-j}\lambda_i^j\otimes t_i,\text{ and}\label{esym}\\
			\sum_{i=1}^n\lambda_i^k\otimes t_i=\sum_{j=1}^k(-1)^{j+1}h_{k-j}p_G^*\delta(\bigwedge\nolimits^j\sigma_n), \label{hsym}
		\end{eqnarray}
		where $s_k$ is the $k$-th elementary symmetric polynomial in $\lambda_1, \cdots, \lambda_n$, and $h_k$ is the $k$-th complete homogeneous symmetric polynomial in $\lambda_1, \cdots, \lambda_n$.
\end{proposition}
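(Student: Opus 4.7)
The plan is to derive both identities from the explicit formula in the preceding proposition by elementary manipulations with symmetric-polynomial generating functions.

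For equation \eqref{esym}, I would start from
\[p_G^*\delta_G(\bigwedge\nolimits^k\sigma_n)=\sum_{1\leq i_1<\cdots<i_k\leq n}\lambda_{i_1}\cdots\lambda_{i_k}\otimes(t_{i_1}+\cdots+t_{i_k})\]
and collect terms by the factor $t_i$. For each $i$, the coefficient is $\lambda_i\cdot s_{k-1}(\lambda_1,\dots,\widehat{\lambda_i},\dots,\lambda_n)$, where the hat denotes omission. The generating-function identity
\[\prod_{m\neq i}(1+\lambda_m u)=\frac{\prod_m(1+\lambda_m u)}{1+\lambda_i u}=\left(\sum_{j\geq 0}s_j u^j\right)\sum_{m\geq 0}(-\lambda_i u)^m\]
gives $s_{k-1}(\widehat{\lambda_i})=\sum_{m=0}^{k-1}(-\lambda_i)^m s_{k-1-m}$. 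Substituting and reindexing $j=m+1$ yields $\lambda_i s_{k-1}(\widehat{\lambda_i})=\sum_{j=1}^k(-1)^{j-1}s_{k-j}\lambda_i^j$, which is exactly \eqref{esym}.

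For equation \eqref{hsym}, I would substitute \eqref{esym} into the right-hand side and interchange the finite sums, writing the RHS as
\[\sum_{i=1}^n\left(\sum_{m=1}^k\lambda_i^m\Bigl(\sum_{j=m}^k(-1)^{j+m}h_{k-j}s_{j-m}\Bigr)\right)\otimes t_i.\]
Setting $r=j-m$, the inner coefficient of $\lambda_i^m\otimes t_i$ becomes $\sum_{r=0}^{k-m}(-1)^r s_r h_{(k-m)-r}$. The classical identity $E(-u)H(u)=1$ between the generating functions $E(u)=\sum s_j u^j=\prod(1+\lambda_i u)$ and $H(u)=\sum h_j u^j=\prod(1-\lambda_i u)^{-1}$ gives $\sum_{r=0}^{N}(-1)^r s_r h_{N-r}=\delta_{N,0}$. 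Applied with $N=k-m$, this coefficient vanishes unless $m=k$, in which case it equals $1$. Hence the RHS collapses to $\sum_{i=1}^n\lambda_i^k\otimes t_i$, proving \eqref{hsym}.

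Neither step really poses an obstacle — both reduce to well-known identities between elementary and complete symmetric polynomials, with the mild bookkeeping being the interchange of summations and a single reindexing. The only thing to be careful about is that the formulas live in $R(T)\otimes K^*(T)$, but since the manipulations only act on the $R(T)$ tensor factor and leave each $t_i$ intact, there is no subtlety: the derivation of \eqref{esym} is a pure $R(T)$-computation, and \eqref{hsym} is obtained by applying an $R(T)$-linear identity coordinate by coordinate in the second factor.
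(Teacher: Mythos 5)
Your proof is correct, and both steps are sound. Your route is slightly different in presentation from the paper's, though the underlying symmetric-function identities are the same. For \eqref{esym}, the paper reaches $\lambda_i s_{k-1}(\widehat{\lambda_i})=\sum_{j=1}^k(-1)^{j+1}s_{k-j}\lambda_i^j$ by iteratively unwinding the inclusion-exclusion $s_{k-1}(\widehat{\lambda_i})=s_{k-1}-\lambda_i s_{k-2}(\widehat{\lambda_i})$, whereas you get the closed form in one shot by expanding $\prod_m(1+\lambda_m u)/(1+\lambda_i u)$ as a geometric series; this is cleaner. For \eqref{hsym}, the paper packages \eqref{esym} as a lower-triangular matrix equation $w=Mv$ with $s$-entries and checks that the inverse matrix has $h$-entries via the recursion $h_{i+1}=s_1h_i-s_2h_{i-1}+\cdots+(-1)^is_{i+1}$; you instead substitute \eqref{esym} directly into the right-hand side and collapse the double sum using $\sum_{r=0}^{N}(-1)^r s_r h_{N-r}=\delta_{N,0}$, i.e.\ $E(-u)H(u)=1$. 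These are the same identity in two guises, but your version avoids the matrix bookkeeping and makes the telescoping explicit; the paper's version makes the inversion structure (and hence the symmetry of the two displayed formulas) more visible. Either is a complete and correct proof.
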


\begin{proof} Equation (\ref{esym}) can be obtained as follows:
		\begin{align*} 
				p_G^*(\delta_G(\bigwedge\nolimits^k\sigma_n))&=\sum_{1\leq i_1<\cdots<i_k\leq n}\lambda_{i_1}\cdots\lambda_{i_k}\otimes(t_{i_1}+\cdots+t_{i_k})\\
				                                                                 &=\sum_{i=1}^n\lambda_i\sum_{\substack{1\leq i_1<\cdots< i_{k-1}\leq n\\i\neq i_j\text{ for all }1\leq j\leq k-1}}\lambda_{i_1}\cdots\lambda_{i_{k-1}}\otimes t_i\\
				                                                                 &=\sum_{i=1}^n\lambda_i\left(s_{k-1}-\sum_{\substack{1\leq i_1<\cdots< i_{k-1}\leq n\\i=i_j\text{ for some }1\leq j\leq k-1}}\lambda_{i_1}\cdots\lambda_{i_{k-1}}\right)\otimes t_i\\
				                                                                 &=\sum_{i=1}^n s_{k-1}\lambda_i\otimes t_i-\lambda_i^2\sum_{\substack{1\leq i_1<\cdots< i_{k-1}\leq n\\i\neq i_j\text{ for all }1\leq j\leq k-2}}\lambda_{i_1}\cdots\lambda_{i_{k-2}}\otimes t_i\\
				                                                                 &\vdots\\
				                                                                 &=\sum_{i=1}^n\sum_{j=1}^k(-1)^{j+1}s_{k-j}\lambda_i^j\otimes t_i.
			\end{align*}
		Next, let $v$ and $w\in(R(T)\otimes K^{-1}(T))^{\bigoplus k}$ be defined by 
		\[v_j=\sum_{i=1}^n\lambda_i^j\otimes t_i, \ \ w_j=p_G^*\delta_G(\bigwedge\nolimits^j\sigma_n),\text{ for }1\leq j\leq k.\]
		Define the matrix $M\in M_{k\times k}(R(T))$ to be 
		\[M_{ij}=\begin{cases}0\ \ &\text{if }i<j\\ (-1)^{j+1}\ \ &\text{if }i=j\\(-1)^{j+1}s_{i-j}\ \ &\text{if }i>j\end{cases}.\]
		Equation (\ref{esym}) can be rewritten succinctly as 
			\[w=Mv.\]
			Note that 
			\[(M^{-1})_{ij}=\begin{cases}0\ \ &\text{if }i<j\\(-1)^{j+1}\ \ &\text{if }i=j\\ (-1)^{j+1}h_{i-j}\ \ &\text{if }i>j\end{cases},\]
			where $h_i$ satisfies
			\[h_{i+1}=s_1h_i-s_2h_{i-1}+\cdots+(-1)^is_{i+1},\]
			which is exactly the recursive definition of complete homogeneous symmetric polynomials in terms of elementary symmetric polynomials. Equation (\ref{hsym}) then follows.
\end{proof}

\begin{proposition}\label{adams_op_comp} In $R(T)\otimes K^*(T)$, we have
	\begin{align*}&\sum_{1\leq i_1<\cdots<i_k\leq n}\lambda_{i_1}^l\cdots\lambda_{i_k}^l\otimes(t_{i_1}+\cdots+t_{i_k})\\=&(-1)^{k}\sum_{p=1}^n(-1)^{p}\sum_{\substack{0\leq k_r\leq l-1\\k_1+\cdots+k_n=lk-p}}\lambda_1^{k_1}\cdots\lambda_n^{k_n}p_G^*\delta_G(\bigwedge\nolimits^{p}\sigma_n).\end{align*}
\end{proposition}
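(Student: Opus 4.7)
The plan is to reduce the left-hand side to a linear combination of the generators $p_G^*\delta_G(\bigwedge\nolimits^p\sigma_n)$ by first regrouping it in the form $\sum_{i=1}^n c_i\otimes t_i$ and expanding each coefficient $c_i$ as a polynomial in $\lambda_i^l$ with fully symmetric coefficients, and then invoking Equation~\eqref{hsym}. A single generating-function identity will then identify the resulting symmetric-polynomial coefficients with the combinatorial sums appearing on the right-hand side.

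For the first step, observe that the left-hand side equals $\sum_{i=1}^n\lambda_i^l\,e_{k-1}(\lambda_1^l,\ldots,\widehat{\lambda_i^l},\ldots,\lambda_n^l)\otimes t_i$ (by grouping the terms by the index $i$ contributing $t_i$). Iterating the recursion
\[
e_m(\lambda_1^l,\ldots,\lambda_n^l)=\lambda_i^l\,e_{m-1}(\lambda_1^l,\ldots,\widehat{\lambda_i^l},\ldots,\lambda_n^l)+e_m(\lambda_1^l,\ldots,\widehat{\lambda_i^l},\ldots,\lambda_n^l)
\]
yields the polynomial identity
\[
\lambda_i^l\,e_{k-1}(\lambda_1^l,\ldots,\widehat{\lambda_i^l},\ldots,\lambda_n^l)=\sum_{q=1}^{k}(-1)^{q-1}\lambda_i^{ql}\,e_{k-q}(\lambda_1^l,\ldots,\lambda_n^l),
\]
whose coefficients are now symmetric in all of $\lambda_1^l,\ldots,\lambda_n^l$. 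Substituting and summing over $i$, then applying Equation~\eqref{hsym} to each $\sum_i\lambda_i^{ql}\otimes t_i$ (with the convention $\bigwedge\nolimits^j\sigma_n=0$ for $j>n$ and $h_m=0$ for $m<0$), the coefficient of $p_G^*\delta_G(\bigwedge\nolimits^p\sigma_n)$ on the left-hand side becomes, after reindexing $r=k-q$,
\[
(-1)^{k+p}\sum_{r=0}^{k-1}(-1)^r e_r(\lambda_1^l,\ldots,\lambda_n^l)\,h_{(k-r)l-p}(\lambda_1,\ldots,\lambda_n).
\]

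The corresponding coefficient on the right-hand side is $(-1)^{k+p}\sum\lambda_1^{k_1}\cdots\lambda_n^{k_n}$, where the sum is over tuples with $0\le k_r\le l-1$ and $k_1+\cdots+k_n=lk-p$. To match the two, I would use the factorization
\[
\prod_{i=1}^n\bigl(1+\lambda_i u+\cdots+(\lambda_i u)^{l-1}\bigr)=\prod_{i=1}^n\frac{1-(\lambda_i u)^l}{1-\lambda_i u}=\Bigl(\sum_{r\ge 0}(-1)^r e_r(\lambda_1^l,\ldots,\lambda_n^l)u^{lr}\Bigr)\Bigl(\sum_{m\ge 0}h_m(\lambda_1,\ldots,\lambda_n)u^m\Bigr).
\]
Extracting the coefficient of $u^{lk-p}$ on the far left recovers the desired sum $\sum\lambda_1^{k_1}\cdots\lambda_n^{k_n}$, while the same coefficient on the far right equals $\sum_{r\ge 0}(-1)^r e_r(\lambda^l)\,h_{lk-p-lr}(\lambda)$; since $p\ge 1$, the summands with $r\ge k$ vanish automatically ($h_m=0$ for $m<0$), which reproduces the finite sum above. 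The only genuinely non-mechanical step is the polynomial identity in the second paragraph; everything else is routine symmetric-function bookkeeping and a single coefficient extraction.
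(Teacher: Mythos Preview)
Your proof is correct and follows essentially the same route as the paper: the ``polynomial identity in the second paragraph'' is precisely Equation~\eqref{esym} (applied with $\lambda_i$ replaced by $\lambda_i^l$), and your generating-function factorization is the paper's identity~\eqref{polyid} with an auxiliary grading variable $u$ inserted. Your packaging---extracting the coefficient of $p_G^*\delta_G(\bigwedge\nolimits^p\sigma_n)$ directly and reading it off from $u^{lk-p}$---is a bit tidier than the paper's double reindexing through $m$ and $p$, but the underlying argument is the same.
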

\begin{proof} The LHS of the equation can be rewritten as 
	\begin{align}
		  &\sum_{1\leq i_1<\cdots<i_k\leq n}\lambda_{i_1}^l\cdots\lambda_{i_k}^l\otimes(t_{i_1}+\cdots+t_{i_k})\nonumber\\
		=&\sum_{i=1}^n\sum_{j=1}^k(-1)^{j+1}\left(\sum_{1\leq i_1<\cdots<i_{k-j}\leq n}\lambda_{i_1}^l\cdots\lambda_{i_{k-j}}^l\right)\lambda_i^{lj}\otimes t_i\nonumber\\
		&(\text{By Proposition }\ref{sympoly}, \text{ Equation }(\ref{esym}))\nonumber\\
		=&\sum_{j=1}^k\sum_{i=1}^n(-1)^{j+1}\left(\sum_{1\leq i_1<\cdots<i_{k-j}\leq n}\lambda_{i_1}^l\cdots\lambda_{i_{k-j}}^l\right)\lambda_i^{lj}\otimes t_i\nonumber\\
		=&\sum_{j=1}^k(-1)^{j+1}\left(\sum_{1\leq i_1<\cdots<i_{k-j}\leq n}\lambda_{i_1}^l\cdots\lambda_{i_{k-j}}^l\right)\sum_{i=1}^{lj}(-1)^{i+1}h_{lj-i}p_G^*\delta(\bigwedge\nolimits^i\sigma_n)\nonumber\\
		&(\text{By Proposition }\ref{sympoly}, \text{ Equation }  (\ref{hsym}))\nonumber\\
		=&\sum_{m=1}^k\sum_{j=m}^k\sum_{p=1}^l(-1)^{j+1}\left(\sum_{1\leq i_1<\cdots<i_{k-j}\leq n}\lambda_{i_1}^l\cdots\lambda_{i_{k-j}}^l\right)\cdot\nonumber\\
		&(-1)^{lm-l+p+1}h_{lj-lm+l-p}p_G^*\delta(\bigwedge\nolimits^{lm-l+p}\sigma_n).\label{eq1}
	\end{align}
	We shall first compute
	\[\sum_{j=m}^k(-1)^{j+1}\left(\sum_{1\leq i_1<\cdots<i_{k-j}\leq n}\lambda_{i_1}^l\cdots\lambda_{i_{k-j}}^l\right)h_{lj-lm+l-p}.\]
	Consider the identity
	\begin{eqnarray}\label{polyid}(1-\lambda_1^l)\cdots(1-\lambda_n^l)\left(\frac{1}{(1-\lambda_1)\cdots(1-\lambda_n)}\right)=\prod_{i=1}^n(1+\lambda_i+\cdots+\lambda_i^{l-1}).\end{eqnarray}
	Comparing the degree $lk-lm+l-p$ terms of both sides, we have
	\begin{align*}
	&\sum_{j=m}^k(-1)^{k-j}\left(\sum_{1\leq i_1<\cdots<i_{k-j}\leq n}\lambda_{i_1}^l\cdots\lambda_{i_{k-j}}^l\right)h_{lj-lm+l-p}\\=&\sum_{\substack{0\leq k_r\leq l-1\\k_1+\cdots+k_n=lk-lm+l-p}}\lambda_1^{k_1}\cdots\lambda_n^{k_n}, \text{ and}\\
	&\sum_{j=m}^k(-1)^{j+1}\left(\sum_{1\leq i_1<\cdots<i_{k-j}\leq n}\lambda_{i_1}^l\cdots\lambda_{i_{k-j}}^l\right)h_{lj-lm+l-p}\\=&(-1)^{k+1}\sum_{\substack{0\leq k_r\leq l-1\\k_1+\cdots+k_n=lk-lm+l-p}}\lambda_1^{k_1}\cdots\lambda_n^{k_n}.
	\end{align*}
	Substituting into Equation (\ref{eq1}), we have that the LHS of the equation in the Proposition is
	\[\sum_{m=1}^k\sum_{p=1}^l(-1)^{lm-l+p+k}\sum_{\substack{0\leq k_r\leq l-1\\k_1+\cdots+k_n=lk-lm+l-p}}\lambda_1^{k_1}\cdots\lambda_n^{k_n}p_G^*\delta_G(\bigwedge\nolimits^{lm-l+p}\sigma_n).\]
	Replacing $lm-l+p$ by the dummy variable $p$ and noting that $\bigwedge\nolimits^p\sigma_n=0$ for $p>n$, we obtain the RHS. 
\end{proof}

\begin{proof}[Proof of Theorem \ref{mainthm} (\ref{unitary})] Proposition \ref{adams_op_comp} together with the injectivity of $p_G^*$ (cf. Proposition \ref{K_G} (\ref{inj})) yields
\begin{eqnarray}\label{eqadamop}\psi^l(\delta_G(\bigwedge\nolimits^k\sigma_n))=(-1)^kl\sum_{p=1}^n(-1)^p\sum_{\substack{0\leq k_r\leq l-1\\k_1+\cdots+k_n=lk-p}}\lambda_1^{k_1}\cdots\lambda_n^{k_n}\delta_G(\bigwedge\nolimits^p\sigma_n).\end{eqnarray}
Applying the forgetful map, which on the RHS of (\ref{eqadamop}) amounts to specializing at $\lambda_1=\cdots=\lambda_n=1$, yields Equation (\ref{unitaryadamsop}). Comparing the degree $lk-p$ terms of both sides of Equation (\ref{polyid}), we have 
\begin{eqnarray}\label{adamscoeff}\sum_{q=0}^{k-1}(-1)^q\left(\sum_{1\leq i_1<\cdots< i_q\leq n}\lambda_{i_1}^l\cdots\lambda_{i_q}^l\right)h_{l(k-q)-p}=\sum_{\substack{0\leq k_r\leq l-1\\k_1+\cdots+k_n=lk-p}}\lambda_1^{k_1}\cdots\lambda_n^{k_n}.\end{eqnarray}
Specializing at $\lambda_1=\cdots=\lambda_n=1$, we get Equation (\ref{adamsopcoeff}). 
When $l=2$, $\mu(n, l, k, p)$ can be easily seen to be $\displaystyle\binom{n}{2k-p}$, and this shows the formula in the special case $\psi^2$.
\end{proof}
\begin{remark}
	\begin{enumerate}
		\item In \cite{N}, Naylor noted that each coefficient in the formula for general $\psi^l$ on $SU(n)$ is a polynomial in $l$. He then used the formula for $\psi^2$ repeatedly to obtain the one for $\psi^{2^n}$, implemented Lagrange interpolation at $l=2, \cdots, 2^n$ and wrote down a formula for $\psi^l$. Our formula (Equation (\ref{unitaryadamsop})) agrees with his formula when $l=2$, and in general is more explicit. 
		\item One can see that our formula (\ref{unitaryadamsop}) does satisfy $\psi^m\psi^l=\psi^{ml}$ as follows. On the one hand we have
		\[\psi^m\psi^l(\delta(\bigwedge\nolimits^k\sigma_n))=(-1)^{k}ml\sum_{q=1}^n\sum_{p=1}^n(-1)^q\mu(n, l, k, p)\mu(n, m, p, q)\delta(\bigwedge\nolimits^q\sigma_n).\]
		On the other hand, we have $\displaystyle \sum_{p=1}^n\mu(n, l, k, p)\mu(n, m, p, q)=\mu(n, ml, k, q)$ because of the bijection
		\begin{align*}
			\bigcup_{p=1}^n&\{(k_1, \cdots, k_n)\in\mathbb{Z}^{\oplus n}|\ 0\leq k_r\leq l-1,\ k_1+\cdots+k_n=lk-p\}\times\\
						 &\{(k_1', \cdots, k_n')\in\mathbb{Z}^{\oplus n}|\ 0\leq k_r\leq m-1,\ k_1'+\cdots+k_n'=mp-q\}\\
						 &\longrightarrow \{(k_1'', \cdots, k_n'')\in\mathbb{Z}^{\oplus n}|\ 0\leq k_r''\leq ml-1,\ k_1''+\cdots+k_n''=mlk-q\}
		\end{align*}
		given by $((k_1, \cdots, k_n), (k_1', \cdots, k_n'))\mapsto (mk_1+k_1', \cdots, mk_n+k_n')$.
		\item The expression $\mu(n, l, k, p)$ in our formula (\ref{unitaryadamsop}) does satisfy the equation $\mu(n, l, k, p)=\mu(n, l, n-k, n-p)$ (cf. \cite[Theorem 2.3]{N}). This can be deduced from the bijection
			\begin{align*}
				&\{(k_1, \cdots, k_n)\in\mathbb{Z}^{\oplus n}|\ 0\leq k_r\leq l-1,\ k_1+\cdots+k_n=lk-p\}\\
				\longrightarrow &\{(k_1', \cdots, k_n')\in\mathbb{Z}^{\oplus n}|\ 0\leq k_r'\leq l-1,\ k_1'+\cdots+k_n'=l(n-k)-(n-p)\}
			\end{align*}
		given by $(k_1, \cdots, k_n)\mapsto (l-1-k_1, \cdots, l-1-k_n)$. 
		\item Though we are unable to see directly, without using Theorem \ref{mainthm} (\ref{unitary}), that the formula for Adams operations on $SU(n)$ in \cite{N} agrees with ours in general, the formulas for Adams operations on $SU(n)$, $n=3, 4$ and $5$ given immediately after \cite[Theorem 2.3]{N} are easily seen to be consistent with ours.
	\end{enumerate}
\end{remark}

\begin{definition}
Let $PK^{-1}(G)$ be the vector space over $\mathbb{Q}$ spanned by the primitive elements of $K^*(G)$. 
\end{definition}
It is known that, for a general compact connected Lie group $G$ of rank $n$, its exponents are $m_i$, $i=1, \cdots, n$ if and only if its rational cohomology ring $H^*(G, \mathbb{Q})$ is an exterior algebra generated by primitive elements of degrees $2m_i+1$, $i=1, \cdots, n$ (cf. \cite{R}). In \cite[Section 2]{N} it was shown, by means of the Chern character isomorphism, that the Adams operation $\psi^l$ on $PK^{-1}(SU(n))$ has eigenvalues $l^i$, $i=2, \cdots, n$, each with multiplicity 1, and that the possible values of $i$ are exactly the exponents of $SU(n)$ plus 1. One can get the following more general result on the eigenvalues of $\psi^l$ on $PK^{-1}(G)$ by adapting the arguments in \cite{N}.
\begin{proposition}\label{polygrowth}
	Let $G$ be a compact Lie group of rank $n$ with torsion-free fundamental group. If the exterior algebra $H^*(G, \mathbb{Q})$ is generated by primitive elements of degrees $2m_i+1$, $1\leq i\leq n$, then the eigenvalues of $\psi^l\otimes\text{Id}_\mathbb{Q}: PK^{-1}(G)\to PK^{-1}(G)$ are $l^{m_1+1}, l^{m_2+1}, \cdots, l^{m_n+1}$, and the entries of the matrix representation of $\psi^l$ with respect to any given basis of $PK^{-1}(G)$ are polynomials in $l$ of degrees at most $\max\{m_1+1, \cdots, m_n+1\}$. 
\end{proposition}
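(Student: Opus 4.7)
The plan is to transport everything to rational cohomology via the Chern character, where the Adams operations act diagonally by degree, and then read off the eigenvalues and the polynomial growth from the hypothesis on the primitive generators.

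First, I would use that under our hypotheses $G$ has torsion-free integral cohomology, so the Chern character gives a $\mathbb{Q}$-algebra isomorphism $\text{ch}:K^{*}(G)\otimes\mathbb{Q}\xrightarrow{\sim}H^{*}(G,\mathbb{Q})$; being a map of graded Hopf algebras, it restricts to a $\mathbb{Q}$-linear isomorphism $PK^{-1}(G)\xrightarrow{\sim}PH^{\text{odd}}(G,\mathbb{Q})$. By the hypothesis $H^{*}(G,\mathbb{Q})=\bigwedge_{\mathbb{Q}}(x_{1},\ldots,x_{n})$ with $\deg x_{i}=2m_{i}+1$, the primitive subspace $PH^{\text{odd}}(G,\mathbb{Q})$ has the basis $x_{1},\ldots,x_{n}$, with $x_{i}$ concentrated in degree $2m_{i}+1$.

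Second, I would establish the key compatibility: on $K^{-1}(X)\otimes\mathbb{Q}$, the Adams operation $\psi^{l}$ corresponds under $\text{ch}$ to the graded endomorphism that multiplies the degree-$(2j+1)$ component by $l^{j+1}$. The cleanest route is via the suspension isomorphism $K^{-1}(X)\cong\widetilde{K}^{0}(\Sigma X)$: the familiar identity $\text{ch}\circ\psi^{l}=\sum_{j}l^{j}\pi_{2j}\circ\text{ch}$ on $\widetilde{K}^{0}(\Sigma X)$, combined with the degree shift $H^{2j}(\Sigma X,\mathbb{Q})\cong H^{2j-1}(X,\mathbb{Q})$ (equivalently, compatibility of $\text{ch}$ with the smash product and $\text{ch}(\beta)$ being the generator of $H^{2}(S^{2},\mathbb{Q})$ for the Bott class $\beta$), gives the claimed formula. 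Applying this to the primitive corresponding to $x_{i}$ shows that $\psi^{l}\otimes\text{Id}_{\mathbb{Q}}$ on $PK^{-1}(G)$ is diagonalizable over $\mathbb{Q}$ with the $n$ distinct eigenvalues $l^{m_{1}+1},\ldots,l^{m_{n}+1}$, each of multiplicity one; this recovers Naylor's $SU(n)$ statement as the special case $m_{i}=i$.

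Third, I would deduce the polynomial-growth assertion purely linear-algebraically. Fix any $\mathbb{Q}$-basis $\{e_{j}\}$ of $PK^{-1}(G)$, and let $C$ be the (invertible, $l$-independent) change-of-basis matrix between $\{e_{j}\}$ and the eigenbasis constructed above. Then the matrix of $\psi^{l}$ in the basis $\{e_{j}\}$ is $C^{-1}\,\text{diag}(l^{m_{1}+1},\ldots,l^{m_{n}+1})\,C$, so every entry is a $\mathbb{Q}$-linear combination of $l^{m_{1}+1},\ldots,l^{m_{n}+1}$ and hence a polynomial in $l$ of degree at most $\max_{i}\{m_{i}+1\}$.

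The only nontrivial step is step two: pinning down that the suspension bookkeeping promotes the exponent $j$ (natural on $H^{2j}$) to $j+1$ on $H^{2j+1}$, so that degree-$(2m_{i}+1)$ primitives acquire eigenvalue $l^{m_{i}+1}$ rather than $l^{m_{i}}$. Everything else is formal from Theorem \ref{K_G}(\ref{Ho}), the Hopf algebra structure on $H^{*}(G,\mathbb{Q})$, and elementary linear algebra.
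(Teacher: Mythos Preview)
Your Chern character argument is correct and is exactly the approach the paper indicates: the paper gives no proof of its own, merely stating that the result follows ``by adapting the arguments in \cite{N}'', which are precisely the Chern character computations you carry out.

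Two minor corrections, neither of which affects the argument. First, torsion-free $\pi_1(G)$ does \emph{not} imply torsion-free $H^*(G;\mathbb{Z})$ (e.g.\ $Spin(n)$ for $n\geq 7$ has $2$-torsion in integral cohomology); fortunately this premise is unnecessary, since the rational Chern character $K^*(X)\otimes\mathbb{Q}\to H^*(X;\mathbb{Q})$ is an isomorphism for any finite CW complex regardless of torsion. Second, the eigenvalues $l^{m_i+1}$ need not be distinct for a general compact $G$ (take $G=SU(2)\times SU(2)$, where both exponents equal $1$); the proposition does not assert distinctness, and your proof of diagonalizability is unaffected since you exhibit an explicit eigenbasis $\{\text{ch}^{-1}(x_i)\}$ independent of $l$.
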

For $G=U(n)$, $m_i=i-1$ for $i=1, \cdots, n$. Then $\psi^l\otimes\text{Id}_\mathbb{Q}$ has distinct eigenvalues and hence it is diagonalizable. Moreover, Adams operations commute with each other. Consequently there exists a basis of common eigenvectors for $\psi^l\otimes\text{Id}_\mathbb{Q}$ (cf. \cite[Section 2]{N}). To find out those vectors it suffices to work with $\psi^2$, which has a simpler form.
\begin{proof}[Proof of Theorem \ref{mainthm} (\ref{eigenvector})]
We shall first prove the recurrence relation for the polynomial sequence. It is easy to see that $p_0(y)=1$. Note that 
\begin{eqnarray}\label{series}
	\sum_{j=0}^\infty p_j(y)t^{2j}=\left(\frac{t}{\sinh t}\right)^y=\text{exp}(yg(t)), 
\end{eqnarray}
where $\displaystyle g(t)=\ln\left(\frac{t}{\sinh t}\right)$. Moreover, 
\[g'(t)=\frac{1}{t}-\coth t=-\sum_{k=1}^\infty\frac{2^{2k}B_{2k}}{(2k)!}t^{2k-1}.\]
Differentiating both sides of Equation (\ref{series}) yields
\[\sum_{j=0}^\infty 2jp_j(y)t^{2j-1}=\text{exp}(yg(t))yg'(t)=\left(\sum_{n=0}^\infty p_n(y)t^{2n}\right)\left(-y\sum_{k=1}^\infty\frac{2^{2k}B_{2k}}{(2k)!}t^{2k-1}\right).\]
Comparing the coefficients of $t^{2j-1}$ of both sides gives the desired recurrence relation. 

To show that the expression (\ref{eigvec}) in Theorem \ref{mainthm} (\ref{eigenvector}) is an eigenvector of Adams operations on $K^*(U(n))\otimes\mathbb{Q}$, it suffices to prove that 
\begin{align}\label{eigwts}
	&\psi^2\left(\sum_{i=1}^n (-1)^{i-1}\left(\sum_{j=0}^{\lfloor\frac{k}{2}\rfloor}\frac{p_j(n)}{(k-2j)!}(n-2i)^{k-2j}\right)\delta(\bigwedge\nolimits^i\sigma_n)\right)\nonumber\\
	=& 2^{n-k} \sum_{i=1}^n (-1)^{i-1}\left(\sum_{j=0}^{\lfloor\frac{k}{2}\rfloor}\frac{p_j(n)}{(k-2j)!}(n-2i)^{k-2j}\right)\delta(\bigwedge\nolimits^i\sigma_n).
\end{align}
From the LHS of Equation (\ref{eigwts}), we have
\begin{align}\label{simplhs}
	&\psi^2\left(\sum_{i=1}^n (-1)^{i-1}\left(\sum_{j=0}^{\lfloor\frac{k}{2}\rfloor}\frac{p_j(n)}{(k-2j)!}(n-2i)^{k-2j}\right)\delta(\bigwedge\nolimits^i\sigma_n)\right)\\
	=&\sum_{i=1}^n (-1)^{i-1}\left(\sum_{j=0}^{\lfloor\frac{k}{2}\rfloor}\frac{p_j(n)}{(k-2j)!}(n-2i)^{k-2j}\right)\cdot (-1)^i\cdot 2\sum_{p=1}^{2i}(-1)^p\binom{n}{2i-p}\delta(\bigwedge\nolimits^p\sigma_n)\nonumber\\
	&(\text{By Theorem \ref{mainthm} (\ref{unitary})})\nonumber\\
	=&\sum_{i=1}^n\sum_{j=0}^{\lfloor\frac{k}{2}\rfloor}\sum_{p=1}^{2i}(-1)^{p-1}\frac{2p_j(n)}{(k-2j)!}(n-2i)^{k-2j}\binom{n}{2i-p}\delta(\bigwedge\nolimits^p\sigma_n)\nonumber\\
	=&\sum_{p=1}^n\sum_{i=\lceil\frac{p}{2}\rceil}^n\sum_{j=0}^{\lfloor\frac{k}{2}\rfloor}(-1)^{p-1}\frac{2p_j(n)}{(k-2j)!}(n-2i)^{k-2j}\binom{n}{2i-p}\delta(\bigwedge\nolimits^p\sigma_n).\nonumber
\end{align}
 Suppose $k$ is an odd number. The sum $\displaystyle \sum_{j=0}^{\lfloor\frac{k}{2}\rfloor}\frac{p_j(n)}{(k-2j)!}(n-2i)^{k-2j}$ then is the coefficient of $t^k$ in the Taylor series of $\displaystyle\left(\frac{t}{\sinh t}\right)^n\sinh ((n-2i)t)$.
 Using $(f(t))_k$ to denote the coefficient of $t^k$ in the Taylor series of $f(t)$, we can simplify the coefficient of $\delta(\bigwedge\nolimits^p\sigma_n)$ in Equation (\ref{simplhs}) as follows:
 \begin{align*}
 	&\sum_{i=\lceil\frac{p}{2}\rceil}^n\sum_{j=0}^{\lfloor\frac{k}{2}\rfloor}(-1)^{p-1}\frac{2p_j(n)}{(k-2j)!}(n-2i)^{k-2j}\binom{n}{2i-p}\\
	=&\sum_{i=\lceil\frac{p}{2}\rceil}^n (-1)^{p-1}\cdot 2\left(\left(\frac{t}{\sinh t}\right)^n\sinh ((n-2i)t)\binom{n}{2i-p}\right)_k\\
	=&(-1)^{p-1}\sum_{i=\lceil \frac{p}{2}\rceil}^n \left(\left(\frac{t}{\sinh t}\right)^n\cdot\binom{n}{2i-p}(e^{(n-2i)t}-e^{-(n-2i)t})\right)_k\\
	=&(-1)^{p-1}\cdot\frac{1}{2}\Bigg(\left(\frac{t}{\sinh t}\right)^n\cdot(e^{-tp}((1+e^t)^n+(-1)^p(1-e^t)^n)-\\
	 &e^{tp}((1+e^{-t})^n+(-1)^p(1-e^{-t})^n))\Bigg)_k\\
	=&(-1)^{p-1}\cdot\frac{1}{2}\Bigg(\left(\frac{t}{\sinh t}\right)^n\cdot\Bigg(e^{t\left(\frac{n}{2}-p\right)}\cdot 2^n\left(\cosh ^n\frac{t}{2}+(-1)^{p+n}\sinh^n\frac{t}{2}\right)-\\
	&e^{t\left(p-\frac{n}{2}\right)}\cdot 2^n\left(\cosh^n\frac{t}{2}+(-1)^p\sinh^n\frac{t}{2}\right)\Bigg)\Bigg)_k\\
	=&(-1)^{p-1}\cdot 2^{n-1}\left(\frac{t^ne^{t\left(\frac{n}{2}-p\right)}}{2^n\sinh^n\frac{t}{2}}+\frac{(-1)^{p+n}t^ne^{t\left(\frac{n}{2}-p\right)}}{2^n\cosh^n\frac{t}{2}}-\frac{t^ne^{t\left(p-\frac{n}{2}\right)}}{2^n\sinh^n\frac{t}{2}}-\frac{(-1)^pt^ne^{t\left(p-\frac{n}{2}\right)}}{2^n\cosh^n\frac{t}{2}}\right)_k\\
	=&(-1)^{p-1}\cdot 2^n\Bigg(\left(\frac{\frac{t}{2}}{\sinh \frac{t}{2}}\right)^n\sinh \left((n-2p)\frac{t}{2}\right)+\\
	&(-1)^p\left(\frac{\frac{t}{2}}{\cosh\frac{t}{2}}\right)^n\cdot\frac{(-1)^ne^{\frac{t}{2}(n-2p)}-e^{\frac{t}{2}(2p-n)}}{2}\Bigg)_k\\
	=&(-1)^{p-1}\cdot 2^n\left(\left(\frac{\frac{t}{2}}{\sinh \frac{t}{2}}\right)^n\sinh \left((n-2p)\frac{t}{2}\right)+O(t^n)\right)_k\\
	=&(-1)^{p-1}\cdot 2^n\left(\left(\frac{\frac{t}{2}}{\sinh \frac{t}{2}}\right)^n\sinh \left((n-2p)\frac{t}{2}\right)\right)_k\ \ (\text{Note that }k<n)\\
	=&(-1)^{p-1}\cdot 2^{n-k}\left(\left(\frac{t}{\sinh t}\right)^n\sinh ((n-2p)t)\right)_k\\
	=&(-1)^{p-1}\cdot 2^{n-k}\sum_{j=0}^{\lfloor\frac{k}{2}\rfloor}\frac{p_j(n)}{(k-2j)!}(n-2p)^{k-2j}.
 \end{align*}
This finishes the proof for the case of $k$ being odd. If $k$ is even, the proof proceeds in a similar fashion. The only difference one needs to bear in mind when proving this case is that $\displaystyle \sum_{j=0}^{\lfloor\frac{k}{2}\rfloor}\frac{p_j(n)}{(k-2j)!}(n-2i)^{k-2j}=\left(\left(\frac{t}{\sinh t}\right)^n\cosh ((n-2i)t)\right)_k$. We leave the details to the reader. 
\end{proof}
\begin{remark}
	\begin{enumerate}
		\item The function $\displaystyle\left(\frac{t}{\sinh t}\right)^y$, which defines the polynomial sequence $\{p_j(y)\}_{j=0}^\infty$, apparently bears tantalizing resemblance to the function $\displaystyle\frac{\frac{\sqrt{t}}{2}}{\sinh \frac{\sqrt{t}}{2}}$, which defines $\widehat{A}$-genera. This suggests a possible alternative proof of the eigenvector formula by means of index theory, but we have yet to find such a proof.
		\item In \cite{N} Naylor outlined a recursive algorithm for computing the eigenvectors of Adams operations on $U(n)$ as follows. Assuming the knowledge of an eigenvector $v_k\in PK^{-1}(U(n))$ corresponding to the eigenvalue $l^{n-k}$, one can find the eigenvector $v'_{k+1}\in PK^{-1}(U(n+1))$ which corresponds to the same eigenvalue and restricts to $v_k$ through the pullback map induced by the natural inclusion $U(n)\to U(n+1)$. This can be done using the branching law of $U(n)$. Replacing $n+1$ appearing in the formula of $v'_{k+1}$ so obtained with $n$ gives the formula for $v_{k+1}\in PK^{-1}(U(n))$. Though he claimed that the algorithm is efficient, no explicit formulas were given. 
	\end{enumerate}
\end{remark}

\section{$G=Sp(n)$, $Spin(n)$ or $G_2$}
Let $\rho$ be an $n$-dimensional representation of a compact Lie group $G$. The functoriality of Adams operations and Theorem \ref{mainthm} (\ref{unitary}) enable us to compute $\psi^l(\delta(\rho))$. More precisely, the induced map $\rho^*: K^*(U(n))\to K^*(G)$ pulls the formula for Adams operations on $K^*(U(n))$ back to the one for $\psi^l(\delta(\rho))$. 

	For $G=Sp(n)$, the isomorphism classes of representations 
	\[\{\sigma_{2n}, \bigwedge\nolimits^2\sigma_{2n}, \cdots, \bigwedge\nolimits^n\sigma_{2n}\}\subset R(Sp(n))\]
	 form a $\mathbb{Z}$-module basis of the indecomposables $I/I^2$, which thus differs from the basis consisting of the fundamental representations by a unimodular linear transformation. By Remark \ref{primindecomp} and the main result of \cite{Ho} (see Theorem \ref{K_G} (1)), $K^*(Sp(n))\cong\bigwedge\nolimits^*_\mathbb{Z}(\delta(\sigma_{2n}), \delta(\bigwedge\nolimits^2\sigma_{2n}), \cdots, \delta(\bigwedge\nolimits^n\sigma_{2n}))$. In Theorem \ref{symplectic}, we choose to express the formula for Adams operations on $K^*(Sp(n))$ in terms of the generators $\delta(\sigma_{2n})$, $\delta(\bigwedge\nolimits^2(\sigma_{2n})), \cdots, \delta(\bigwedge\nolimits^n\sigma_{2n})$ instead of those involving fundamental representations for aesthetic reasons. 
\begin{proof}[Proof of Theorem \ref{symplectic}]	
	 Let $\rho$ be the standard representation $\sigma_n$ of $Sp(n)$. By applying the pullback map $\rho^*: K^*(U(n))\to K^*(Sp(n))$ to Equation (\ref{unitaryadamsop}), and noting that $\bigwedge\nolimits^p\sigma_{2n}$ and $\bigwedge\nolimits^{2n-p}\sigma_{2n}$ are isomorphic representations of $Sp(n)$, we have the formula in Theorem \ref{symplectic}.
\end{proof}
The proof for the spin case is similar, but we need some more representation theory to deal with the (half) spin representations. This complication explains the length of the formulas in Theorem \ref{spin}. 
\begin{proof}[Proof of Theorem \ref{spin}]
For $G=Spin(2n+1)$, its $K$-theory is given by 
\[K^*(Spin(2n+1))\cong\bigwedge\nolimits^*_\mathbb{Z}(\delta(\sigma_{2n+1}), \cdots, \delta(\bigwedge\nolimits^{n-1}\sigma_{2n+1}), \delta(S)),\]
where $S$ is the spin representation, whose dimension is $2^n$. The tensor square of $S$ decomposes as (cf. \cite[Exercise 19.16]{FH})
\[S^{\otimes 2}=\bigoplus_{p=0}^n\bigwedge\nolimits^p\sigma_{2n+1}.\]
It follows that
\begin{align}
	\delta(\bigwedge\nolimits^n\sigma_{2n+1})&=\delta(S^{\otimes 2})-\sum_{p=1}^{n-1}\delta(\bigwedge\nolimits^p\sigma_{2n+1})\nonumber\\
	                                                                    &=2^{n+1}\delta(S)-\sum_{p=1}^{n-1}\delta(\bigwedge\nolimits^p\sigma_{2n+1}).\label{spinrep}
\end{align}
Theorem \ref{mainthm} (\ref{unitary}), the functoriality of Adams operations, the fact that $\bigwedge\nolimits^p\sigma_{2n+1}\cong\bigwedge\nolimits^{2n+1-p}\sigma_{2n+1}$ and Equation (\ref{spinrep}) yield the formulas in Theorem \ref{spin} (\ref{spinodd}). 

Similarly, for $G=Spin(2n)$, its $K$-theory is given by 
\[K^*(Spin(2n))\cong\bigwedge\nolimits^*(\delta(\sigma_{2n}), \cdots, \delta(\bigwedge\nolimits^{n-2}\sigma_{2n}), \delta(S_+), \delta(S_-)),\]
where $S_+$ and $S_-$ are positive and negative half spin representations respectively, whose dimensions are both $2^{n-1}$. Note that (cf. \cite[Exercises 19.6 and 19.7]{FH})
\begin{align}
	S_+^{\otimes 2}\oplus S_-^{\otimes 2}&=\bigoplus_{p\geq 1}\left(\bigwedge\nolimits^{n-2p}\sigma_{2n}\right)^{\oplus 2}\oplus \bigwedge\nolimits^n\sigma_{2n}, \text{ and}\label{sumofsq}\\
	S_+\otimes S_-&=\bigoplus_{p\geq 0}\bigwedge\nolimits^{n-2p-1}\sigma_{2n}.\label{tensorprodpm}
\end{align}
Applying $\delta$ to the above equations and using its derivation property lead to 
\begin{align}
	2^n(\delta(S_+)+\delta(S_-))&=2\sum_{p\geq 1}\delta(\bigwedge\nolimits^{n-2p}\sigma_{2n})+\delta(\bigwedge\nolimits^n\sigma_{2n}),\nonumber\\
	\delta(\bigwedge\nolimits^n\sigma_{2n})&=2^n(\delta(S_+)+\delta(S_-))-2\sum_{p\geq 1}\delta(\bigwedge\nolimits^{n-2p}\sigma_{2n}),\label{dsumofsq}\\
	2^{n-1}(\delta(S_+)+\delta(S_-))&=\sum_{p\geq 0}\delta(\bigwedge\nolimits^{n-2p-1}\sigma_{2n})\label{dtensorprodpma}, \\
	\delta(\bigwedge\nolimits^{n-1}\sigma_{2n})&=2^{n-1}(\delta(S_+)+\delta(S_-))-\sum_{p\geq 1}\delta(\bigwedge\nolimits^{n-2p-1}\sigma_{2n}).\label{dtensorprodpm}
\end{align}
Again Theorem \ref{mainthm} (\ref{unitary}), the functoriality of Adams operations, the fact that $\bigwedge\nolimits^p\sigma_{2n+1}$ $\cong$ $\bigwedge\nolimits^{2n+1-p}\sigma_{2n+1}$ and Equations (\ref{dsumofsq}) and (\ref{dtensorprodpm}) yield the formula for Adams operations on $\delta(\bigwedge\nolimits^k\sigma_{2n})$ in Theorem \ref{spin} (\ref{spineven}).

The computation of $\psi^l(\delta(S_\pm))$ requires a bit more work. By Theorem \ref{mainthm} (\ref{unitary}) and functoriality of Adams operations, we have
\[\psi^2(\delta(S_\pm))=2^n\delta(S_\pm)-2\delta(\bigwedge\nolimits^2 S_\pm).\]
The two exterior squares $\bigwedge\nolimits^2 S_+$ and $\bigwedge\nolimits^2 S_-$ are both isomorphic to $\bigoplus_{i\geq 0}\bigwedge\nolimits^{n-4i-2}\sigma_{2n}$ (cf. \cite[p. 63]{Bot}). Hence
\[\psi^2(\delta(S_+)-\delta(S_-))=2^n(\delta(S_+)-\delta(S_-)).\]
Applying $\psi^2$ repeatedly gives
\[\psi^{2^k}(\delta(S_+)-\delta(S_-))=2^{kn}(\delta(S_+)-\delta(S_-)).\]
Using Proposition \ref{polygrowth} and Lagrange interpolation at $l=2^k$ for various $k$, we have in general
\begin{eqnarray}\label{adamsopdiff}
	\psi^l(\delta(S_+)-\delta(S_-))=l^n(\delta(S_+)-\delta(S_-)).
\end{eqnarray}
Besides, by Equation (\ref{dtensorprodpma}), Theorem \ref{mainthm} (\ref{unitary}), functoriality of Adams operations and the fact that $\bigwedge\nolimits^p\sigma_{2n}\cong\bigwedge\nolimits^{2n-p}\sigma_{2n}$, we have
\begin{align}
		&\psi^l(\delta(S_+)+\delta(S_-))\nonumber\\
	=&\left(\frac{1}{2}\right)^{n-1}l\sum_{k\geq 0}\sum_{p\geq 1}((\alpha(2n, l, n-2k-1, n-2p-1)\nonumber\\
					       &-\alpha(2n, l, n-2k-1, n-1))\delta(\bigwedge\nolimits^{n-2p-1}\sigma_{2n})\nonumber\\
					       &-(\alpha(2n, l, n-2k-1, n-2p)-2\mu(2n, l, n-2k-1, n))\delta(\bigwedge\nolimits^{n-2p}\sigma_{2n}))\nonumber\\
					       &+l\sum_{k\geq 0}((\alpha(2n, l, n-2k-1, n-1)-2\mu(2n, l, n-2k-1, n))(\delta(S_+)+\delta(S_-)).\label{adamsopsum}
\end{align}
The desired formulas for $\psi^l(\delta(S_\pm))$ in Theorem \ref{spin} (\ref{spineven}) then follow from Equations (\ref{adamsopdiff}) and (\ref{adamsopsum}).
\end{proof}
\begin{remark}
	The formulas in Theorem \ref{spin}, as they appear, involve fractional coefficients. They are in fact integers, as Adams operations act on \emph{integral} $K$-theory. It seems to be an interesting combinatorial and number theoretic fact in its own right, but we have yet to find a direct proof of it.
\end{remark}

Finally, we recover the formulas for Adams operations on $K^*(G_2)$ in \cite{W3}, where they were derived by means of the Chern character isomorphism. Let $\rho_1$ and $\rho_2$ be the fundamental representations of $G_2$ with dimensions 7 and 14 respectively. Note that $\bigwedge\nolimits^2\rho_1=\bigwedge\nolimits^5\rho_1=\rho_1+\rho_2$ and $\bigwedge\nolimits^3\rho_1=\bigwedge\nolimits^4\rho_1=\rho_1^2-\rho_2$. By the derivation property of $\delta$, $\delta(\bigwedge\nolimits^3\rho_1)=\delta(\bigwedge\nolimits^4\rho_1)=14\delta(\rho_1)-\delta(\rho_2)$. Reprising the method of using Theorem \ref{mainthm} and the functoriality of Adams operations, we get
\begin{align*}
	&\psi^l(\delta(\rho_1))=l\sum_{p=1}^6(-1)^{p+1}\mu(7, l, 1, p)\delta(\bigwedge\nolimits^p\rho_1)=\frac{2l^6+13l^2}{15}\delta(\rho_1)+\frac{l^2-l^6}{30}\delta(\rho_2), \\
	 &\psi^l(\delta(\bigwedge\nolimits^2\rho_1))=l\sum_{p=1}^6(-1)^p\mu(7, l, 2, p)\delta(\bigwedge\nolimits^p\rho_1)=\frac{13l^2-10l^6}{3}\delta(\rho_1)+\frac{5l^6+l^2}{6}\delta(\rho_2),\\	                                                                                
	                                                                       &\psi^l(\delta(\rho_2))=\psi^l(\delta(\bigwedge\nolimits^2\rho_1))-\psi^l(\delta(\rho_1))=\frac{52l^2(1-l^4)}{15}\delta(\rho_1)+\frac{l^2(13l^4+2)}{15}\delta(\rho_2).
\end{align*}
\begin{remark}
	The formulas in \cite{W3} are expressed with respect to the basis 
	\[\{\delta(\rho_1), \delta(\bigwedge\nolimits^2\rho_1)\}.\]
\end{remark}

\end{document}